\documentclass[12pt,a4paper]{article}
\usepackage{amsmath, amsfonts,
   amssymb, amsbsy,
   amsthm, amscd, euscript}



\newtheorem{theor}{Theorem}
\theoremstyle{definition}
\newtheorem{state}[theor]{Proposition}
\newtheorem{lemma}[theor]{Lemma}
\newtheorem{define}{Definition}
\newtheorem{example}{Example}
\theoremstyle{remark}
\newtheorem{rem}{Remark}


\newcommand{\cEv}{\partial}
\newcommand{\BBR}{\mathbb{R}}

\newcommand{\cE}{\mathcal{E}}

\newcommand{\cF}{\mathcal{F}}
\newcommand{\cH}{\mathcal{H}}

\newcommand{\cX}{{\EuScript X}}    
\newcommand{\cY}{{\EuScript Y}}    
\newcommand{\boldb}{{\boldsymbol{b}}}

\newcommand{\bn}{{\boldsymbol{n}}}
\newcommand{\bp}{{\boldsymbol{p}}}
\newcommand{\bq}{{\boldsymbol{q}}}
\newcommand{\br}{{\boldsymbol{r}}}
\newcommand{\bs}{{\boldsymbol{s}}}
\newcommand{\bu}{{\boldsymbol{u}}}

\newcommand{\bH}{{\boldsymbol{H}}}
\newcommand{\bL}{{\boldsymbol{L}}}

\newcommand{\gothg}{\mathfrak{g}}
\newcommand{\gA}{\mathfrak{A}}

\newcommand{\vph}{\varphi}
\newcommand{\dd}{\partial}
\newcommand{\Id}{{\mathrm d}}

\DeclareMathOperator{\id}{id}
\DeclareMathOperator{\sym}{sym}

\DeclareMathOperator{\Hom}{Hom}

\DeclareMathOperator{\CDiff}{\mathcal{C}Diff}

\newcommand{\lshad}{[\![}
\newcommand{\rshad}{]\!]}

\newcommand{\by}[1]{\textit{{#1}}}
\newcommand{\jour}[1]{\textit{{#1}}}
\newcommand{\vol}[1]{\textbf{{#1}}}
\newcommand{\book}[1]{\textrm{{#1}}}

\newcommand{\ib}[3]{ \{\!\{ {#1},{#2} \}\!\}_{{#3}} }

\title
{Variational Lie algebroids\\ and homological evolutionary vector fields}

\date{October 31, 2010; accepted December 29, 2010}

\author
{Arthemy V. Kiselev,
\thanks{{}\,
Mathematical Institute, University of Utrecht, P.O.Box 80.010, 3508 TA Utrecht, The Netherlands.}
\thanks{{}\,
\textit{Address for correspondence}: 
Johann Bernoulli Institute for Mathematics and Computer Science,
University of Groningen, 
P.O.Box~407, 9700~AK Groningen, The Netherlands.
\textit{E-mail}: \texttt{A.V.Kiselev\symbol{"40}rug.nl}.}
\quad%
Johan W. van de Leur\,${}^*$\thanks{\textit{E-mail}: \texttt{J.W.vandeLeur\symbol{"40}uu.nl}.}}



\begin{document}
\maketitle

\begin{abstract}\noindent%
We define Lie algebroids over infinite jet spaces
and establish their equivalent representation through
homological evolutionary vector fields.

\noindent\textbf{Keywords:} {Lie algebroid, BRST\/-\/differential, Poisson structure, integrable systems, string theory.}
\end{abstract}

\paragraph*
{Introduction.}
The construction of Lie algebroids over smooth manifolds is important in differential geometry (particularly, in Poisson geometry) and appears in various models of mathematical physics (e.g., in Poisson sigma\/-\/models).
We extend the classical definition of Lie algebroids over smooth manifolds \cite{Vaintrob} to the construction of variational Lie algebroids over infinite jet spaces. We define these structures in a standard way via vector bundles and also 
through homological vector fields $Q^2=0$ on infinite jet super\/-\/bundles, then proving the equivalence. Our generalization of the classical construction manifestly respects the geometry which appears under mappings between smooth manifolds. For this reason, the variational picture, which we develop here, more fully grasps the geometry of strings in space\/-\/time \cite{AKZS}.

First, we very briefly recall 
the definition of classical Lie algebroids; 
we refer to \cite{Vaintrob} 
for more details (see also \cite{Voronov} or
the surveys \cite{
YKSDB
} and references therein).
The standard examples of Lie algebroids over usual manifolds are the tangent bundle or the Poisson algebroid structure of the cotangent bundle to a Poisson manifold.
At the same time,
Lie algebras are toy examples of Lie algebroids over a point. 
Likewise, 
we view smooth manifolds $M^m$ as (very poor) `infinite jet spaces' for mappings of the point into them, so that the manifolds become `fibres' in the `bundles' $\pi\colon M^m\to\{\text{pt}\}$. 
This very informal (but equally productive\,!) understanding is our starting point.
We shall show what becomes of the Lie algebroids over $M^m$ 
under the extension of the base point to an $n$-\/dimensional 
manifold $\varSigma^n$, which yields a true jet 
bundle $J^\infty(\varSigma^n\to M^m)$.

Our results are structured in this paper as follows.
It is readily seen that a literal transfer of the classical
definition, see p. \pageref{DefLieAlgd}, 
over smooth manifolds
to the infinite jet bundles is impossible because the Leibniz 
rule \eqref{LeibnizInAlg} is lost \textit{ab initio} (to the same extent as it is lost for the commutators of evolutionary vector fields or for the variational Poisson bracket \cite{ClassSym}). For this reason, 
in section \ref{SecDef} we take, as 
the new definition, an appropriate consequence of the classical one: namely, the commutation closure for the images of the anchors. 
(Remarkably, this 
is often \textsl{postulated} [for convenience, rather than derived] 
as a part of the definition of a Lie algebroid, e.g., 
see \cite{Vaintrob,Voronov} vs \cite{Herz,YKSMagri}.)
In these terms, our main result of \cite{SymToda} states that
the non\/-\/periodic 2D Toda equations associated with semi\/-\/simple complex Lie algebras \cite{Leznov}, being the representatives for the vast class of hyperbolic Euler\/--\/Lagrange systems of Liouville type \cite{SokolovUMN}, are variational Lie algebroids.

\begin{rem}
We duly recall 
that Lie algebroids over the spaces of finite jets of sections for the \textsl{tangent} bundle $\pi\colon T\varSigma\to \varSigma^n$ were defined in \cite{Kumpera}. However, in this paper we let the vector bundle $\pi$ be arbitrary. 
Instead of the tangent bundle (as in \cite{Kumpera}), we use model examples of a principally different nature. Namely, our illustrations come from the geometry of differential equations: e.g., we address Hamiltonian (non-)\/evolutionary systems \cite{Topical} or the 2D Toda chains \cite{Leznov}, 
the solutions of which are $r$ functions in two variables for every semi\/-\/simple complex Lie algebra of rank $r$. In other words, we analyse the general case when the base and fibre dimensions in the bundle $\pi$ are not related.
\end{rem}

In section \ref{SecVarQ}, we represent the structure of a variational Lie algebroid $\gA$, defined in the above sense, by the homological evolutionary vector field $Q$ on the infinite jet bundle to $\Pi\gA$ viewed as the vector bundle with parity reversed fibres. This is again nontrivial due to the use of the Leibniz rule (now missing) in the equivalence proof for the classical definition. 
We also impose the natural nondegeneracy assumption on the variational anchors so that the odd field $Q$ is well defined.
To 
illustrate our assertion, we let the variational anchors be
Hamiltonian differential operators and obtain the fields $Q$ which are
themselves Hamiltonian with respect to the corresponding variational Poisson bi\/-\/vectors and the canonical symplectic structure of \cite{KuperCotangent}.

In the sequel, the ground field is $\BBR$ and all mappings are supposed to be $C^\infty$-\/smooth. Likewise, we suppose that all total
differential operators are local (i.e., polynomial in total derivatives).
We assume
that all spaces are purely even, 
i.e., that the manifolds are not superized, unless we state the opposite
explicitly when using the parity reversion $\Pi$. 
We employ the standard techniques from the geometry of differential equations \cite{ClassSym,Olver}; for convenience, we summarize the necessary notions in the beginning of section \ref{SecDef}. 
Because all our reasonings are local,
we can conveniently restrict to local charts 
and, instead of the jet bundles for mappings $\varSigma^n\to M^m$ of manifolds, we consider the jet spaces for $m$\/-\/dimensional vector bundles $\pi\colon E^{m+n}\to\varSigma^n$.
By default, we define all structures 
on the empty infinite jet spaces, identifying
autonomous evolution equations with evolutionary vector fields, which
is standard.
The delicate interplay between the definitions of all structures at hand on the empty jet spaces and on differential equations would require a much longer
description; this will be the object of another paper. 
In the meantime, 
we refer to the review~\cite{Topical}. 

We follow the notation of \cite{SymToda,TMPhGallipoli,d3Bous}, 
borrowing it in part from \cite{YKSDB,
ClassSym,
Olver
}.
This paper is a continuation of the papers \cite{SymToda,d3Bous}, 
to which we refer for motivating examples of variational Lie algebroids.
Some of the results, which we present here in 
detail, were briefly reported in the preprint \cite{Twente}.

\paragraph*
{Classical Lie algebroids: a review.}\label{SecAlgd}
\noindent%
Let $M^m$ be a smooth real $m$-\/dimensional manifold 
($1\leq m\leq+\infty$)
and $TM\to M^m$ be its tangent bundle.

\begin{define}[\cite{Vaintrob}]\label{DefLieAlgd}
A \textsl{Lie algebroid} over a 
manifold $M^m$ is a vector 
bundle $\xi\colon\Omega^{d+m}\to M^m$ 
the space of sections $\Gamma\Omega$ of which
is equipped with a Lie algebra
structure $[\,,\,]_A$ together with a morphism of the 
bundles $A\colon\Omega\to TM$,
called the \textsl{anchor}, such that the Leibniz rule
\begin{equation}\label{LeibnizInAlg}
[f\cdot\cX,\cY]_A=f\cdot[\cX,\cY]_A
  -\bigl(A(\cY)f\bigr)\cdot\cX
\end{equation}
holds for any
$\cX,\cY\in\Gamma\Omega$ and any $f\in C^\infty(M^m)$. 
\end{define}

Essentially, 
the anchor in a Lie algebroid is a specific 
fiberwise\/-\/linear mapping from a given vector bundle 
over a smooth manifold $M^m$ to its tangent bundle.

\begin{lemma}[\cite{Herz}
]\label{PropMorphismFollows}
The anchor $A$ maps the bracket $[\,,\,]_A$ for sections
of the vector bundle $\xi$ 
to the Lie bracket $[\,,\,]$ for sections of the tangent bundle
to the manifold $M^m$.
\end{lemma}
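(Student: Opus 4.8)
The plan is to recover the classical fact that the anchor is a homomorphism of Lie algebras, i.e.\ that $A([\cX,\cY]_A)=[A(\cX),A(\cY)]$ for all $\cX,\cY\in\Gamma\Omega$, using only the two axioms at hand: the Jacobi identity for $[\,,\,]_A$ and the Leibniz rule~\eqref{LeibnizInAlg}. The first preparatory step is to rewrite \eqref{LeibnizInAlg} in its ``other'' form, obtained from antisymmetry of the bracket, $[\cX,f\cdot\cY]_A=f\cdot[\cX,\cY]_A+\bigl(A(\cX)f\bigr)\cdot\cY$, valid for every $f\in C^\infty(M^m)$.

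The core of the argument is to expand the double bracket $[\cX,[\cY,f\cdot\cZ]_A]_A$ in two ways for arbitrary sections $\cX,\cY,\cZ\in\Gamma\Omega$ and an arbitrary function $f$. On one side I apply the Leibniz rule twice, directly. On the other side I first invoke the Jacobi identity, $[\cX,[\cY,f\cdot\cZ]_A]_A=[[\cX,\cY]_A,f\cdot\cZ]_A+[\cY,[\cX,f\cdot\cZ]_A]_A$, and then apply the Leibniz rule to each of the two resulting terms. Equating the two expansions, the terms proportional to $f$ cancel against one another by the Jacobi identity for $[\,,\,]_A$ applied to $\cX,\cY,\cZ$ alone, and the ``mixed'' terms of the shape $(A(\cdot)f)\cdot[\cdot,\cdot]_A$ cancel pairwise; what remains is the relation $\bigl(A(\cX)A(\cY)f-A(\cY)A(\cX)f\bigr)\cdot\cZ=\bigl(A([\cX,\cY]_A)f\bigr)\cdot\cZ$, that is, $\bigl([A(\cX),A(\cY)]f\bigr)\cdot\cZ=\bigl(A([\cX,\cY]_A)f\bigr)\cdot\cZ$.

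To finish, I would pass from this identity --- which holds for all $\cZ$ and all $f$ --- to the claimed equality of vector fields by a pointwise argument: at each point $p\in M^m$ pick a local section $\cZ$ with $\cZ(p)\neq0$, which is possible because $\xi$ is a vector bundle; then the last relation forces $\bigl([A(\cX),A(\cY)]f\bigr)(p)=\bigl(A([\cX,\cY]_A)f\bigr)(p)$ for every $f$, and since $p$ was arbitrary the two derivations of $C^\infty(M^m)$ coincide. This is the standard computation for classical Lie algebroids, so I do not expect a genuine obstacle; the only delicate point is the bookkeeping of the (six) terms produced by the two double-Leibniz expansions, and checking that every cancellation invokes exactly the Jacobi identity for $[\,,\,]_A$ and nothing else. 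It is worth emphasising that this is precisely the argument that does \emph{not} survive the passage to infinite jet spaces, since the Leibniz rule~\eqref{LeibnizInAlg} is lost there --- which is why the paper will instead adopt closedness of $\img A$ under commutation as the working definition.
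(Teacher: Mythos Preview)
Your argument is correct and is precisely the standard derivation the paper alludes to: the paper does not spell out a proof but merely records that the claim ``is a consequence of the Leibniz rule~\eqref{LeibnizInAlg} and the Jacobi identity for the Lie algebra structure $[\,,\,]_A$ in $\Gamma\Omega$,'' and your double expansion of $[\cX,[\cY,f\cdot\cZ]_A]_A$ is exactly how one unpacks that sentence. Your closing observation about why this reasoning fails over infinite jets is also on point and matches the paper's motivation for Definition~\ref{DefFrob}.
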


This property is a consequence
of the Leibniz rule \eqref{LeibnizInAlg} and the Jacobi identity
for the Lie algebra structure $[\,,\,]_A$ in $\Gamma\Omega$.

\begin{lemma}[\cite{Vaintrob}]\label{RemHVF}
Equivalently, a Lie algebroid structure on $\Omega$ is
a homological vector field $Q$ on $\Pi\Omega$
(take the fibres of $\Omega$, reverse their parities, and thus obtain
the new super\/-\/bundle $\Pi\Omega$ over $M^m$).
These homological vector fields, 
which are differentials on
$C^\infty(\Pi\Omega)=\Gamma(\bigwedge^\bullet\Omega^*)$, equal
\begin{equation}\label{HomVF}
Q=A_i^\alpha(u)b^i\frac{\dd}{\dd u^\alpha}
 -\frac{1}{2}b^i\,c^k_{ij}(u)\,b^j\frac{\dd}{\dd b^k},
\qquad [Q,Q]=0\ \Leftrightarrow\ 2Q^2=0,
\end{equation}
where
\begin{itemize}
\item
$(u^\alpha)$ is a system of local coordinates near a point $u\in M^m$,
\item
$(p^i)$ are local coordinates along the $d$-\/dimensional fibres of $\Omega$
and $(b^i)$ are the respective coordinates on $\Pi\Omega$, and
\item
$[e_i,e_j]_A=c^k_{ij}(u)e_k$ give
the structural constants for a $d$-\/element
local basis $(e_i)$ of sections
in $\Gamma\Omega$ over the point~$u$ and 
$A(e_i)=A_i^\alpha(u)\cdot\dd/\dd u^\alpha$ is the image of $e_i$
under the anchor $A$. 
\end{itemize}
\end{lemma}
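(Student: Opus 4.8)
The plan is to establish the two implications in Lemma~\ref{RemHVF} separately, throughout identifying $C^\infty(\Pi\Omega)$ with $\Gamma(\bigwedge^\bullet\Omega^*)$ so that the fibre coordinates $b^i$ form a basis of $\Omega^*$ dual to $(e_i)$ and the polynomial degree in the $b$'s is the exterior degree. By definition a \emph{homological} vector field $Q$ is an odd derivation of this graded-commutative algebra that raises the degree by one and satisfies $[Q,Q]=0$. First I would note that the coordinate shape \eqref{HomVF} is forced by these degree constraints: $Q(u^\alpha)$ is homogeneous of degree $1$, hence equals $A_i^\alpha(u)\,b^i$ for some functions $A_i^\alpha$ on $M^m$ that assemble into a bundle morphism $A\colon\Omega\to TM$, $A(e_i)=A_i^\alpha\,\partial/\partial u^\alpha$; and $Q(b^k)$ is homogeneous of degree $2$, hence equals $-\tfrac12 c^k_{ij}(u)\,b^ib^j$ with $c^k_{ij}=-c^k_{ji}$. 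The derivation property then determines $Q$ on all of $C^\infty(\Pi\Omega)$, so the content of the lemma is the equivalence between $[Q,Q]=0$ and the Lie algebroid axioms for the data $\bigl(A_i^\alpha,c^k_{ij}\bigr)$.

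For the direction from a Lie algebroid to $Q$, given $(\Omega,[\,,\,]_A,A)$ I would put $A(e_i)=A_i^\alpha\,\partial/\partial u^\alpha$ and $[e_i,e_j]_A=c^k_{ij}e_k$, define $Q$ by \eqref{HomVF}, and compute $Q^2=\tfrac12[Q,Q]$, an even derivation, by letting it act on the coordinates $u^\gamma$ and $b^\ell$. Acting on $u^\gamma$ and collecting the (antisymmetrized) coefficient of $b^ib^j$ yields
\[
A_i^\beta\frac{\partial A_j^\gamma}{\partial u^\beta}-A_j^\beta\frac{\partial A_i^\gamma}{\partial u^\beta}=c^k_{ij}\,A_k^\gamma,
\]
which is precisely the assertion that $A$ intertwines $[\,,\,]_A$ with the commutator of vector fields --- Lemma~\ref{PropMorphismFollows}, itself a consequence of \eqref{LeibnizInAlg} and the Jacobi identity. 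Acting on $b^\ell$ and collecting the coefficient of $b^ib^jb^k$ yields
\[
A_i^\alpha\frac{\partial c^\ell_{jk}}{\partial u^\alpha}+c^\ell_{im}c^m_{jk}+(\text{cyclic in }i,j,k)=0,
\]
which is the Jacobi identity for $[\,,\,]_A$ expanded in the basis $(e_i)$ once \eqref{LeibnizInAlg} is used to commute the structure functions past the anchor. Hence both families of identities hold and $Q^2=0$.

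For the converse, starting from a homological $Q$ I would read off $A_i^\alpha$ and $c^k_{ij}$ as above, reconstruct the anchor $A\colon\Omega\to TM$, and \emph{define} the bracket on $\Gamma\Omega$ by the unique formula forced by the structure functions together with \eqref{LeibnizInAlg}, namely $[f^ie_i,g^je_j]_A:=f^ig^j c^k_{ij}e_k+f^i\bigl(A(e_i)g^k\bigr)e_k-g^j\bigl(A(e_j)f^k\bigr)e_k$. Antisymmetry is built in via $c^k_{ij}=-c^k_{ji}$, the Leibniz rule holds by construction, and the Jacobi identity is equivalent to the two displayed identities, which in turn are equivalent to $[Q,Q]=0$; thus $(\Omega,[\,,\,]_A,A)$ is a Lie algebroid. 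It remains to check that these locally defined data glue: since $Q$ is a single globally defined vector field on $\Pi\Omega$, its expressions in overlapping charts agree, and through the dictionary between homological fields on $\Pi\Omega$ and differentials on $\Gamma(\bigwedge^\bullet\Omega^*)$ this forces $A$ and $[\,,\,]_A$ to be globally well defined and independent of the chosen local basis $(e_i)$.

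I expect the main obstacle to be organizational rather than conceptual: the careful bookkeeping of the several terms of $Q^2=\tfrac12[Q,Q]$, in particular the cross terms between the $A\,b\,\partial_u$ and the $b\,c\,b\,\partial_b$ pieces (which pair $\partial A/\partial u$ against products $c\cdot c$), and the correct (anti)symmetrizations in the $b$-indices needed to extract exactly the morphism property and the Jacobi identity. The only genuinely delicate point is basis and coordinate independence of the recovered bracket; rather than verifying the transformation law of the $c^k_{ij}$ by hand, I would argue invariantly that $[\,,\,]_A$ is determined by $Q$ via the Chevalley--Eilenberg-type correspondence, so globality of $Q$ gives globality of $[\,,\,]_A$ for free.
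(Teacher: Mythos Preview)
Your proposal is correct and, for the direction the paper actually sketches (Lie algebroid $\Rightarrow$ $Q^2=0$), follows essentially the same route: evaluate $Q^2$ on the base and fibre coordinates separately, identify the coefficient of $\partial/\partial u^\alpha$ with the anchor\/-\/morphism property (Lemma~\ref{PropMorphismFollows}) and the coefficient of $\partial/\partial b^\ell$ with the Jacobi identity for $[\,,\,]_A$, using the Leibniz rule~\eqref{LeibnizInAlg} to rewrite the cross term $A_i^\alpha\,\partial c^\ell_{jk}/\partial u^\alpha$ before cyclically symmetrizing. The paper's presentation differs only in emphasis: it tracks the numerical factors explicitly (the $\tfrac14$ from the $bcb\,\partial_b$ self\/-\/interaction doubling to $\tfrac12$ by the antisymmetry of $c^k_{ij}$, then tripling under the cyclic sum) and displays the intermediate expression~\eqref{BeforeTriple} in which the Leibniz rule is recognized, whereas you state the resulting identities directly.

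Your write\/-\/up is more complete than the paper's sketch in that you also treat the converse direction and the coordinate independence of the recovered bracket; the paper omits both. Your handling of globality via the Chevalley--Eilenberg correspondence is the right invariant argument and avoids the transformation\/-\/law computation you were worried about.
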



\begin{proof}[Sketch of the proof]
The anti\/-\/commutator $[Q,Q]=2Q^2$ of the odd vector field $Q$ with itself is a vector field. Its coefficient of $\dd/\dd u^\alpha$ vanishes because $A$ is the Lie algebra homomorphism by 
Lemma \ref{PropMorphismFollows}. 
The equality to zero of the coefficient of $\dd/\dd b^q$ is achieved in three steps. First, we notice that the application of the second term in \eqref{HomVF} to itself by the graded Leibniz rule for vector fields yields the overall numeric factor $\tfrac{1}{4}$, but it doubles due to the skew\/-\/symmetry of the structural constants $c^k_{ij}$. Second, we recognize the right\/-\/hand side of the
Leibniz rule \eqref{LeibnizInAlg}, with $c^k_{ij}(u)$ for $f\in C^\infty(M^m)$, in the term
\begin{equation}\label{BeforeTriple}
\frac{1}{2}\sum\nolimits_{ijn}b^ib^jb^n\left(
-A_n^\alpha\,\frac{\dd}{\dd u^\alpha}\bigl(c^q_{ij}(u)\bigr)
 +c^\ell_{ij}c^q_{\ell n}
\right)\cdot\frac{\dd}{\dd b^q}.
\end{equation}
Third, we note that the \textsl{cyclic} permutation $i\to j\to n\to i$ of the odd variables $b^i$,\ $b^j$, $b^n$ does not change the sign of \eqref{BeforeTriple}. We thus triple it by taking the sum over the permutations (and duly divide by three) and, as the coefficient of $\dd/\dd b^q$, obtain the Jacobi identity for $[\,,\,]_A$,
\begin{equation}\label{JacobiUsual}
\tfrac{1}{2}\cdot\tfrac{1}{3}\cdot\sum\nolimits_{\circlearrowright}
\bigl[[e_i,e_j]_A, e_n\bigr]_A=0.
\end{equation}
The zero in its right\/-\/hand side calculates the required coefficient.
Thus, $Q^2=0$. 
\end{proof}


\section{Variational Lie algebroids}\label{SecDef}
\noindent%
For consistency, we first summarize the notation.
Let $\varSigma^n$ be an $n$-\/dimensional orientable smooth real manifold,
and let $\pi\colon E^{m+n}\xrightarrow[M^m]{}\varSigma^n$ be a vector 
bundle over it 
with $m$-\/dimensional fibres $M^m\ni u=(u^1,\ldots,u^m)$.
By $J^\infty(\pi)$ we denote the infinite jet space over 
$\pi$, and 
we set $\pi_\infty\colon J^\infty(\pi)\to \varSigma^n$. We denote by $u_\sigma$,
$|\sigma|\geq0$, its fibre coordinates. Then $[u]$ stands for
the differential dependence on $u$ and its derivatives up to some finite order,
and we put $\cF(\pi)\mathrel{{:}{=}}C^\infty(J^\infty(\pi))$, 
understanding it in the standard way 
(as the inductive limit of filtered algebras, see \cite{ClassSym}).

Let $\xi\colon N\to \varSigma^n$ 
be another vector 
bundle over the same base $\varSigma^n$ and take its pull\/-\/back $\pi_\infty^*(\xi)\colon N 
\mathbin{{\times}_{\varSigma^n}}J^\infty(\pi)\to J^\infty(\pi)$ along $\pi_\infty$.
By definition, the $\cF(\pi)$-\/module of sections
$\Gamma\bigl(\pi_\infty^*(\xi)\bigr)=\Gamma(\xi)\mathbin{{\otimes}_{C^\infty(\varSigma^n)}}\cF(\pi)$ is called \textsl{horizontal}, see \cite{Lstar}.

\begin{example}\label{CanonicalInduced}
Three horizontal $\cF(\pi)$-\/modules are canonically associated with
every jet space $J^\infty(\pi)$, see, e.g., \cite{Lstar}. 
First, let $\xi\mathrel{{:}{=}}\pi$, 
whence we obtain the $\cF(\pi)$-\/module $\Gamma(\pi_\infty^*(\pi))
=\Gamma(\pi)\mathbin{{\otimes}_{C^\infty(\varSigma^n)}}\cF(\pi)$.
The shorthand notation 
is $\varkappa(\pi)\equiv\Gamma(\pi_\infty^*(\pi))$.
Its sections $\vph\in\varkappa(\pi)$ 
are in a one\/-\/to\/-\/one correspondence with
the $\pi$-\/vertical evolutionary derivations 
$\cEv_\vph=\sum_\sigma \frac{\Id^{|\sigma|}}{\Id x^\sigma}
   (\vph)\cdot\dd/\dd u_\sigma$ 
on $J^\infty(\pi)$, here
$\tfrac{\Id}{\Id x}=\tfrac{\dd}{\dd x}+u_x\,\tfrac{\dd}{\dd u}+\cdots$
is the total derivative.
For all $\psi$ such that $\cEv_\vph(\psi)$ makes sense,
the linearizations $\ell_\psi^{(u)}$ are defined by
$\ell_\psi^{(u)}(\vph)=\cEv_\vph(\psi)$, where $\vph\in\varkappa(\pi)$.

Second, let $\xi$ be the $q$th exterior power of the cotangent bundle $T^*\varSigma$, here $q\leq n$. Taking the pull\/-\/back $\pi_\infty^*(\xi)$, we obtain the horizontal $\cF(\pi)$-\/module $\bar{\Lambda}^q(\pi)$ of its sections, which are the \textsl{horizontal $q$-\/forms} on the jet space $J^\infty(\pi)$. In local coordinates, they are written as $h\bigl(x,[u]\bigr)\cdot\Id x^{i_1}\wedge\dots\wedge\Id x^{i_q}$, where $h\in\cF(\pi)$ and $1\leq i_1<\dots<i_q\leq n$.

Thirdly, take the 
module $\bar{\Lambda}^n(\pi)$ 
of highest $\pi$-\/horizontal forms on $J^\infty(\pi)$.
Then we denote by $\widehat{\varkappa}(\pi)=
\Hom_{\cF(\pi)}\bigl(\varkappa(\pi),\bar{\Lambda}^n(\pi)\bigr)$ the horizontal $\cF(\pi)$-\/module dual to $\varkappa(\pi)$.
\end{example}




\begin{rem}\label{RemComposite}   
The structure of fibres in the bundle $\xi$ can be composite 
and their dimension infinite. With the following canonical example
we formalize 
the 2D Toda geometry \cite{SymToda}. 
Namely, let $\zeta\colon I^{r+n}\xrightarrow[W^r]{}\varSigma^n$ be a 
vector 
bundle with $r$-\/dimensional fibres in which $w=(w^1$,\ $\ldots$,\ $w^r)$ are local coordinates. Consider the infinite jet bundle $\xi_\infty\colon J^\infty(\xi)\to \varSigma^n$ and, by definition, set either $\xi=\zeta_\infty\circ\zeta_\infty^*(\zeta)\colon\varkappa(\xi)\to \varSigma^n$ or
$\xi=\zeta_\infty\circ\zeta_\infty^*(\widehat{\zeta})\colon\widehat{\varkappa}(\xi)\to \varSigma^n$. 

To avoid an inflation 
of formulas, we shall always briefly denote 
by $\Gamma\Omega\bigl(\xi_\pi\bigr)$
the horizontal $\cF(\pi)$-\/module at hand.
\end{rem}

By definition, mappings between 
$\cF(\pi)$-\/modules are called \textsl{total differential operators} if they are sums of compositions of $\cF(\pi)$-\/linear maps and liftings of vector fields from the base $\varSigma^n$ onto $J^\infty(\pi)$ by the Cartan connection $\dd/\dd x\mapsto\Id/\Id x$.
The main objects of our study are total differential operators 
(i.e., matrix, linear differential operators in total derivatives) 
that take values 
in the Lie algebra $\gothg(\pi)=\bigl(\varkappa(\pi),[\,,\,]\bigr)$. 

\begin{define}\label{DefFrob}
Let the above assumptions and notation hold.
Consider a total differential operator
$A\colon\Gamma\Omega\bigl(\xi_\pi\bigr)\to\varkappa(\pi)$
the 
image of which is closed under the commutation 
in $\gothg(\pi)$: 
\begin{equation}\label{EqDefFrob}  
[\text{im}\,A, \text{im}\,A]\subseteq\text{im}\,A
\ 
\Longleftrightarrow
\ 
\bigl[A(\bp),A(\bq)\bigr]=A\bigl([\bp,\bq]_A\bigr),\quad
\bp,\bq\in\Gamma\Omega\bigl(\xi_\pi\bigr).
\end{equation}
The operator $A$ transfers the Lie algebra structure 
$[\,,\,]\bigr|_{\mathrm{im}\,A}$ in a Lie subalgebra of $\gothg(\pi)$
to the 
bracket $[\,,\,]_A$
in the quotient $\Gamma\Omega\bigl(\xi_\pi\bigr)/\ker A$.
%
%
Then the triad 
\begin{equation}\label{EqVarLieAlgd}
\bigl(\Gamma\Omega\bigl(\xi_\pi\bigr),[\,,\,]_A\bigr)
\xrightarrow{\ A\ {}}\bigr(\varkappa(\pi),[\,,\,]\bigr)
\end{equation}
is the \textsl{variational Lie algebroid} $\gA$ over the infinite jet space $J^\infty(\pi)$,
and the Lie algebra homomorphism $A$ is the \textsl{variational anchor}.
\end{define}

Essentially, 
the variational anchor in a variational Lie algebroid is a specific linear mapping 
from a given horizontal module of sections of an induced bundle over
the infinite jet space $J^\infty(\pi)$
to the prescribed horizontal module of generating sections for evolutionary derivations on $J^\infty(\pi)$.

\begin{example}\label{ExKdV}
Consider the Hamiltonian operator 
$A_2=-\tfrac{1}{2}\,\tfrac{\Id^3}{\Id x^3}+
w\,\tfrac{\Id}{\Id x}+\tfrac{\Id}{\Id x}\circ w$,
which yields the second Poisson structure for the 
Korteweg\/--\/de Vries equation $w_t=-\tfrac{1}{2}w_{xxx}+3ww_x$. 
The image of $A_2$ is closed under commutation:
the bracket $[\,,\,]_{A_2}$ 
is \cite{SokolovUMN}
\begin{equation}\label{BrA2}
[p,q]_{A_2}=\cEv_{A_2(p)}(q)-\cEv_{A_2(q)}(p)
  +
{\tfrac{\Id}{\Id x}(p)\cdot q -p\cdot\tfrac{\Id}{\Id x}(q)}.
\end{equation}
\end{example}

\begin{example}\label{Exd3Bous}
In \cite{d3Bous} we demonstrated that the dispersionless $3$-\/component
Boussinesq system of hydrodynamic type admits a two\/-\/parametric family of
finite deformations $[\,,\,]_{\boldsymbol{\epsilon}}$ for the
standard bracket $[\,,\,]$ of its symmetries $\sym\cE$. 
For this, we used 
two 
recursion differential operators 
$R_i\colon\sym\cE\to\sym\cE$, $i=1,2$,
the images of which are closed under commutation 
and which are \textsl{compatible}
in this sense, spanning the two\/-\/dimensional space of the 
operators $R_{\boldsymbol{\epsilon}}$ with involutive images.
We obtain the new brackets $[\,,\,]_{\boldsymbol{\epsilon}}$  
via \eqref{EqDefFrob} (c.f. \cite{
Contractions} and references therein): 
$\bigl[R_{\boldsymbol{\epsilon}}(p),R_{\boldsymbol{\epsilon}}(q)\bigr]=
R_{\boldsymbol{\epsilon}}\bigl([p,q]_{\boldsymbol{\epsilon}}\bigr)$ for $p,q\in\sym\cE$ and $\boldsymbol{\epsilon}\in\BBR^2\setminus\{0\}$.
\end{example}

\begin{rem}
We standardly regard autonomous evolution equations as 
the $\pi$-\/vertical evolutionary vector fields on
the infinite jet bundles $J^\infty(\pi)$ for 
vector bundles $\pi$ over $\varSigma^n\ni x$.
We thus reduce the construction at hand to the 
``absolute'' case of empty jet spaces.

Generally, there is no Frobenius theorem for the involutive distributions
spanned by the images of the variational anchors.
Nevertheless, the Euler\/--\/Lagrange systems $\cE$ 
of Liouville type provide an exception \cite{SymToda,SokolovUMN} because of their outstanding symmetry structure: 
the variational anchors 
yield the involutive distributions
of evolutionary vector fields that are tangent to the \textsl{integral
manifolds}, namely, to the differential equations $\cE$ themselves.
In these terms, 
for a \textsl{given} differential equation $\cE\subset J^\infty(\pi)$, we accordingly restrict the definition of variational Lie algebroid
to the infinite\/-\/dimensional manifold $\cE$ such that the variational
anchors yield its 
symmetries, and, if necessary, we shrink the fibres of $\xi$.
\end{rem}

\begin{example}
The generators of the Lie algebra of point symmetries for the
$(2+1)$-\/dimensional
`heavenly' Toda equation $u_{xy}=\exp(-u_{zz})$, see \cite{BoyerFinley},
are 
$\vph^x=\widehat{\square}^x\bigl(p(x)\bigr)$ or
$\vph^y=\widehat{\square}^y\bigl(\bar{p}(y)\bigr)$, where
$p$ and $\bar{p}$ are arbitrary smooth functions and 
$\widehat{\square}^x=u_x+\tfrac{1}{2}z^2\,\tfrac{\Id}{\Id x}$, 
$\widehat{\square}^y=u_y+\tfrac{1}{2}z^2\,\tfrac{\Id}{\Id y}$.
The image of each operator is 
closed under commutation such that
$
[p,q]_{\widehat{\square}^x}=\cEv_{\widehat{\square}^x(p)}(q)
  -\cEv_{\widehat{\square}^x(q)}(p)
+p\cdot\tfrac{\Id}{\Id x}(q) - \tfrac{\Id}{\Id x}(p)\cdot q
$ 
for any $p(x)$ and $q(x)$, and similarly for $\widehat{\square}^y$.
\end{example}

\begin{example}\label{ExVarCoTan}
Two distinguished constructions of the variational Lie algebroids emerge from the variational (co)\/tangent bundles over $J^\infty(\pi)$, see Example \ref{CanonicalInduced} and Remark \ref{RemComposite}. 
These two cases involve an important intermediate component,
which we regarded as the Miura substitution in \cite{SymToda}.

Namely, consider the induced fibre bundle $\pi_\infty^*(\zeta)$ and fix its section $w$.
Making no confusion, we continue denoting by the same letter $w$
the fibre coordinates in $\zeta$ and the fixed 
section $w[u]\in\Gamma\bigl(\pi_\infty^*(\zeta)\bigr)$,
which is a nonlinear differential operator in $u$.

Obviously, the substitution $w=w[u]$ converts 
the horizontal $\cF(\zeta)$-\/modules 
to the submodules of horizontal $\cF(\pi)$-\/modules.\footnote{%
For instance, 
we have $\zeta=\pi$ for the recursion
operators $\varkappa(\pi)\to\varkappa(\pi)$, 
see Example \ref{Exd3Bous}. 
It is standard to identify the domains of Hamiltonian operators with 
$\widehat{\varkappa}(\pi)$, see Example \ref{ExKdV}. 
Here we set $w=\id\colon\Gamma(\pi)\to\Gamma(\zeta)$ in both cases;
this is why the auxiliary bundle $\zeta$ becomes ``invisible''
in the standard exposition (e.g.\ in~\cite{Lstar,JKGolovko2008}).}
By this argument, we obtain the modules
\[
\varkappa(\zeta){\bigr|}_{w[u]\colon J^\infty(\pi)\to\Gamma(\zeta)}\quad
\text{and}\quad
\widehat{\varkappa}(\zeta){\bigr|}_{w[u]\colon J^\infty(\pi)\to\Gamma(\zeta)},
\]
where the latter is the module of sections of the pull\/-\/back by $\pi_\infty^*$ for the $\bar{\Lambda}^n(\zeta)$-\/dual to the induced bundle $\zeta_\infty^*(\zeta)$.
We emphasize that by this approach we preserve the correct
transformation rules for the sections 
in $\varkappa(\zeta)$ or $\widehat{\varkappa}(\zeta)$
under (unrelated\,!) reparametrizations of the fibre coordinates $w$ and $u$ in the bundles $\zeta$ and $\pi$, respectively. 

We say that the linear operators $A\colon\varkappa(\zeta){\bigr|}_{w[u]}\to
\varkappa(\pi)$ and $A\colon\widehat{\varkappa}(\zeta){\bigr|}_{w[u]}\to\varkappa(\pi)$ subject to \eqref{EqDefFrob} are the variational anchors of \textsl{first} and \textsl{second kind}, respectively.
Under 
differential reparametrizations 
$\tilde{u}=\tilde{u}[u]\colon
J^\infty(\pi)\to\Gamma(\pi)$ and $\tilde{w}=\tilde{w}[w]\colon
J^\infty(\zeta)\to\Gamma(\zeta)$, 
the operators $A$ of first kind are transformed according to the formula
$A
\mapsto\tilde{A}=\ell_{\tilde{u}}^{(u)}\circ A\circ\ell_w^{(\tilde{w})}
  \Bigr|_{\substack{w=w[u]\\u=u[\tilde{u}]}}$. 
Respectively, the operators of second kind obey the rule 
$A
\mapsto\tilde{A}=\ell_{\tilde{u}}^{(u)}\circ A\circ
 \bigl(\ell_{\tilde{w}}^{(w)}\bigr)^\dagger 
  \Bigr|_{\substack{w=w[u]\\u=u[\tilde{u}]}}$, 
where the symbol $\dagger$ denotes the adjoint operator.
The recursion operators with involutive images, which we addressed 
in \cite{d3Bous}, are examples of the anchors of first kind.
All Hamiltonian operators and, more generally, Noether operators
with involutive images are variational anchors of second 
kind \cite{d3Bous}.
The operators $\square$,\ $\overline{\square}$ 
that yield symmetries of 
the Euler\/--\/Lagrange Liouville\/-\/type systems
are also anchors of second kind \cite{SymToda,TMPhGallipoli};
they are `non\/-\/skew\/-\/adjoint generalizations
of Hamiltonian operators' \cite{SokolovUMN} in exactly this sense.
\end{example}


In the remaining part of this section we 
analyse the standard structure of the induced bracket $[\,,\,]_A$ on the quotient of $\Gamma\Omega\bigl(\xi_\pi\bigr)$ by $\ker A$.
By the Leibniz rule, two sets of summands appear in the bracket of
evolutionary vector fields $A(\bp),A(\bq)$ that belong to the image of the variational anchor $A$:  
\begin{equation}\label{LeibnizForOp}
\bigl[A(\bp),A(\bq)\bigr]=A\bigl(\cEv_{A(\bp)}(\bq)-\cEv_{A(\bq)}(\bp)\bigr)
  +\bigl(\cEv_{A(\bp)}(A)(\bq)-\cEv_{A(\bq)}(A)(\bp)\bigr).
\end{equation}
In the first summand we have used the permutability of
evolutionary derivations and total derivatives. 
The second summand hits the image of $A$ by construction.
Consequently, the 
Lie algebra structure $[\,,\,]_A$ on the domain of $A$
equals
\begin{equation}\label{EqOplusBKoszul} 
[\bp,\bq]_A=\cEv_{A(\bp)}(\bq)-\cEv_{A(\bq)}(\bp)+ \{\!\{\bp,\bq\}\!\}_A.
\end{equation}
Thus, the bracket $[\,,\,]_A$, which is defined up to $\ker A$,
contains the two standard summands and
the bi\/-\/differential skew\/-\/symmetric part 
$\{\!\{\,,\,\}\!\}_A\in\CDiff\bigl(\Gamma\Omega(\xi_\pi)\times\Gamma\Omega(\xi_\pi)
\to\Gamma\Omega(\xi_\pi)\bigr)$. 
For any $\bp,\bq,\br 
\in\Gamma\Omega(\xi_\pi)$,
the Jacobi identity for \eqref{EqOplusBKoszul} is 
\begin{align}
0&=\sum\nolimits_\circlearrowright \bigl[ [\bp,\bq]_A,\br\bigr]_A =
 \sum\nolimits_\circlearrowright
  \bigl[ \cEv_{A(\bp)}(\bq)-\cEv_{A(\bq)}(\bp)
  +\ib{\bp}{\bq}{A}, \br\bigr]_A \notag\\
%
%
{}&=\sum\nolimits_\circlearrowright\Bigl\{
\cEv_{A\bigl(\cEv_{A(\bp)}(\bq)-\cEv_{A(\bq)}(\bp)\bigr)}(\br)
-\cEv_{A(\br)}\bigl(\cEv_{A(\bp)}(\bq)-\cEv_{A(\bq)}(\bp)\bigr)
  \notag \\
{}&\qquad{}
+\ib{\cEv_{A(\bp)}(\bq)-\cEv_{A(\bq)}(\bp)}{\br}{A}\notag\\
{}&\qquad{}
 +\cEv_{A\bigl(\ib{\bp}{\bq}{A}\bigr)}(\br)
 -\underline{\cEv_{A(\br)}\bigl(\ib{\bp}{\bq}{A}\bigr)}
 +\ib{\ib{\bp}{\bq}{A}}{\br}{A}\Bigr\}. \label{JacobiKoszul}
\end{align}
The underlined summand contains derivations of the coefficients of
$\ib{\,}{\,}{A}$, which belong to $\cF(\pi)$. 
Even if the action of evolutionary fields 
$\cEv_\vph^{(u)}=\vph\tfrac{\dd}{\dd u}+\ldots$ on 
the the arguments of $A$ 
is set to zero (which makes sense, see below),
these summands may not vanish. 
Note that the Jacobi identity for $[\,,\,]_A$ then amounts to 
\begin{equation}\tag{\ref{JacobiKoszul}${}'$}\label{JacobiKoszulShort}
\sum\nolimits_{\circlearrowright}\Bigl(
-\cEv^{(\bu)}_{A(\br)}\bigl(\ib{\bp}{\bq}{A}\bigr)
+\ib{\ib{\bp}{\bq}{A}}{\br}{A}\Bigr)=0.
\end{equation}
Formulas \eqref{EqDefFrob} and \eqref{EqOplusBKoszul}, 
formula \eqref{JacobiKoszul}, and \eqref{JacobiKoszulShort} will play the same role in the proof of Theorem \ref{ThEvHomVF} in the next section as Lemma \ref{PropMorphismFollows} and, respectively, formulas \eqref{JacobiUsual} and \eqref{LeibnizInAlg} [or \eqref{BeforeTriple}] did for the equivalence of the two classical definitions.


\section{Homological evolutionary vector fields}\label{SecVarQ}\noindent%
In this section we represent variational Lie algebroids using the homological evolutionary vector fields $Q$ on the infinite jet super\/-\/bundles that we naturally associate with Definition \ref{DefFrob}. 
To achieve this goal, we make two preliminary steps. Namely, we first identify the odd \textsl{neighbour}\footnote{By Yu. I. Manin's definition, the \textsl{neighbours} of a Lie algebra $\gothg$ are $\gothg^*$,\ $\Pi\gothg$, and $\Pi\gothg^*$, where $\Pi$ is the parity reversion functor,
c.\,f. \cite{Voronov}.%
} 
$\overline{J^\infty}(\Pi\xi_\pi)$ of the infinite horizontal jet bundle $\overline{J^\infty}(\xi_\pi)$ over $J^\infty(\pi)$. Second, we impose the conditions upon the variational anchors $A$ such that the resulting homological field $Q$ becomes well defined
on $\overline{J^\infty}(\Pi\xi_\pi)$; 
here we use the method which we already applied in \cite{SymToda}.


We recall that, by construction, the bundle $\xi_\pi\colon\Omega(\xi_\pi)\to J^\infty(\pi)$ is induced by $\pi_\infty$ from the vector 
bundle $\xi$ over the base $\varSigma^n$. 
Let us consider the horizontal infinite jet bundle $\overline{J^\infty}(\xi_\pi)$ for the vector bundle $\pi_\infty^*(\xi)$ over $J^\infty(\pi)$, 
see \cite{Topical,Lstar,JKGolovko2008} for more details and examples.
Finally let us reverse the parity of the fibres in the bundle $\overline{J^\infty}(\xi_\pi)\to J^\infty(\pi)$, 
we thus denote by $\Pi\colon\bp\mapsto\boldb$
the parity reversion; right below we recall what the variables
$\bp$ and $\boldb$~are. 
This yields the horizontal infinite jet \textsl{super}\/-\/bundle, which we denote by $\overline{J^\infty}(\Pi\xi_\pi)$, over $J^\infty(\pi)$.
We now summarize the notation:\label{PList}
\begin{itemize}
\item $x$ is the $n$-\/tuple of local coordinate(s) on the base manifold $\varSigma^n$;
\item $u_{\boldsymbol{\sigma}}$, where $|\boldsymbol{\sigma}|\geq0$, are the jet coordinates in a fibre of the bundle $\pi_\infty\colon J^\infty(\pi)\to \varSigma^n$;
\item $\bp$ is the fibre coordinate in 
the induced 
bundle $\pi_\infty^*(\xi)$ over $J^\infty(\pi)$, 
and 
$\bp$,\ $\bp_x$,\ $\ldots$,\ $\bp_{\boldsymbol{\tau}}$
with any multi\/-\/index $\boldsymbol{\tau}$
are the fibre coordinates in the horizontal infinite jet bundle 
$\overline{J^\infty}(\xi_\pi)$ over it;
\item $\boldb$,\ $\boldb_x$,\ $\ldots$,\ $\boldb_{\boldsymbol{\tau}}$, 
where $|\boldsymbol{\tau}|\geq0$,
are the odd fibre coordinates in the horizontal infinite jet super\/-\/bundle $\overline{J^\infty}(\Pi\xi_\pi)\to J^\infty(\pi)$;
\item the variational anchor $A$, being a 
linear differential operator in total derivatives, 
is a fiberwise linear function on 
either $\overline{J^\infty}(\xi_\pi)$ or $\overline{J^\infty}(\Pi\xi_\pi)$ and takes values in $\varkappa(\pi)$, which is the space of generating
sections for evolutionary vector fields on $J^\infty(\pi)$;
\item $[\,,\,]_A$ is the bracket \eqref{EqOplusBKoszul} on $\Gamma\Omega(\xi_\pi)$.
It is defined up to the kernel of $A$; 
we now take \textsl{any} representative of 
the bi\/-\/differential operation $\ib{\,}{\,}{A}$ from
the equivalence class and tautologically extend it to the
bi\/-\/linear function on the horizontal jet space.
\end{itemize}
Over each point $\theta^\infty\in J^\infty(\pi)$, the odd fibre of the horizontal infinite jet super\/-\/bundle $\overline{J^\infty}(\Pi\xi_\pi)$
contains the linear subspace $\ker A$ that is determined by the (infinite prolongation of the) linear equation $A{\bigr|}_{\theta^\infty}(\boldb)=0$.
It is important that the linear space of its solutions does not depend on the point $[\boldb]$ in the fibre, because the differential operator $A{\bigr|}_{\theta^\infty}$ depends only on the base point $\theta^\infty\in J^\infty(\pi)$ and the equation is linear. To eliminate the functional freedom in the kernel of $A$, 
we require that the operator $A$ be \textsl{nondegenerate} in the sense 
of \cite{SymToda}: $A\circ\nabla=0$ implies $\nabla=0$.


\begin{theor}\label{ThEvHomVF}
Let $A$ be a nondegenerate variational anchor and all above assumptions and notation hold.
Whatever be the choice of the bracket $\ib{\,}{\,}{A}\mod\ker A$,
the following odd evolutionary vector field on the horizontal infinite jet super\/-\/space $\overline{J^\infty}(\Pi\xi_\pi)$ is homological\textup{:}
\begin{equation}\label{VarHomVF}
Q=\cEv^{(\bu)}_{A(\boldb)}
 -\tfrac{1}{2}\cEv^{(\boldb)}_{\ib{\bp}{\bq}{A} {\bigr|}_{
\substack{
  \bp\mathrel{{:}{=}}\boldb\\ 
  \bq\mathrel{{:}{=}}\boldb
} 
} 
}, 
\qquad [Q,Q]=2Q^2=0.
\end{equation}
Moreover, either there is a unique canonical representative
in the equivalence class $Q\mod\cEv^{(\boldb)}_{\bn}$
of such fields in the fibre 
of $\overline{J^\infty}(\Pi\xi_\pi)\to J^\infty(\pi)$
over each point $\theta^\infty\in J^\infty(\pi)$,
here $\bn\in\ker A$ so that $\cEv^{(\boldb)}_{\bn}$ is 
a symmetry of the kernel, 
or, when the anchor $A$ is close to degenerate in the sense below, the uniqueness is achieved by fixing $\ib{\,}{\,}{A}$ on \textup{(}at most\textup{)} the boundary of a star\/-\/shaped domain centered at the origin of the fibre over $\theta^\infty$ in some finite\/-\/order horizontal jet space $\overline{J^{|\boldsymbol{\tau}|}}(\Pi\xi_\pi)$, $|\boldsymbol{\tau}|<\infty$.
\end{theor}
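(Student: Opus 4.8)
The plan is to mimic the three-step structure of the sketch for Lemma~\ref{RemHVF}, but systematically replace every place where the classical argument invokes the Leibniz rule \eqref{LeibnizInAlg} by the corresponding variational identity established at the end of Section~\ref{SecDef}. First I would write out $[Q,Q]=2Q^2$ explicitly as an odd evolutionary vector field on $\overline{J^\infty}(\Pi\xi_\pi)$, using the fact that the anti-commutator of two evolutionary vector fields is again an evolutionary vector field, so it suffices to check that its generating section vanishes. That generating section splits, according to which of the two terms of $Q$ in \eqref{VarHomVF} acts on which, into a $\cEv^{(\bu)}$-\/component along the fibre coordinates $u_{\boldsymbol{\sigma}}$ of $J^\infty(\pi)$ and a $\cEv^{(\boldb)}$-\/component along the odd fibre coordinates $\boldb_{\boldsymbol{\tau}}$.

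Next I would dispose of the $u$-\/component. Its generating section is $\cEv^{(\bu)}_{A(\boldb)}\bigl(A(\boldb)\bigr)$ suitably (anti)symmetrized in the two copies of $\boldb$; because the two copies are odd, the symmetrization picks out exactly the skew-symmetric part, which by \eqref{LeibnizForOp}–\eqref{EqOplusBKoszul} is $\tfrac12\bigl[A(\boldb),A(\boldb)\bigr]$, and this lies in $\img A$ and equals $A\bigl(\ib{\boldb}{\boldb}{A}\bigr)$ by the defining property \eqref{EqDefFrob} of the variational anchor. The cross-term coming from the second summand of $Q$ acting through $A$ on $\boldb$ cancels this against the factor $-\tfrac12$, precisely as in the classical case where $A$ being a Lie algebra homomorphism (Lemma~\ref{PropMorphismFollows}) killed the coefficient of $\dd/\dd u^\alpha$. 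So the $u$-\/component vanishes by \eqref{EqDefFrob} together with the decomposition \eqref{EqOplusBKoszul}.

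Then comes the $\boldb$-\/component, which is the heart of the matter and the analogue of \eqref{BeforeTriple}–\eqref{JacobiUsual}. Applying the second term of $Q$ to itself by the graded Leibniz rule produces the numeric factor $\tfrac14$ which doubles to $\tfrac12$ by the skew-symmetry of $\ib{\,}{\,}{A}$ in its two odd arguments; then the action of the first term of $Q$ on the bracket in the second term contributes the derivation $\cEv^{(\bu)}_{A(\boldb)}\bigl(\ib{\boldb}{\boldb}{A}\bigr)$ of the coefficients of $\ib{\,}{\,}{A}$, which is exactly the underlined-type term appearing in \eqref{JacobiKoszul}. Collecting everything and using that a cyclic permutation of three odd copies of $\boldb$ leaves the sign unchanged, I would triple-and-divide-by-three to recognize the full cyclic sum and identify the generating section of the $\boldb$-\/component with the left-hand side of \eqref{JacobiKoszulShort}, i.e. with $\sum_{\circlearrowright}\bigl(-\cEv^{(\bu)}_{A(\br)}(\ib{\bp}{\bq}{A})+\ib{\ib{\bp}{\bq}{A}}{\br}{A}\bigr)$ evaluated at $\bp=\bq=\br=\boldb$. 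This is zero precisely because \eqref{JacobiKoszulShort} is the Jacobi identity for $[\,,\,]_A$, which holds by hypothesis (it is built into Definition~\ref{DefFrob}). Hence $[Q,Q]=0$. I expect the main obstacle to be bookkeeping the sign and combinatorial factors correctly: unlike the classical sketch, here the ``structure constants'' $c^k_{ij}(u)$ are replaced by the bi-differential operation $\ib{\,}{\,}{A}$ whose coefficients are genuine functions on $J^\infty(\pi)$ on which $\cEv^{(\bu)}$ acts non-trivially, so the derivation terms do not drop out and one must track that they reassemble into \eqref{JacobiKoszul} rather than into \eqref{JacobiUsual}; care is also needed because $\cEv^{(\bu)}$ and $\cEv^{(\boldb)}$ act on disjoint sets of fibre coordinates, which is what makes the cross-terms combine cleanly.

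\textbf{Independence of the representative and uniqueness.} Finally I would address the last sentence of the statement. Changing $\ib{\,}{\,}{A}$ by an element $\bn\in\ker A$ changes the second term of $Q$ by $\cEv^{(\boldb)}_{\bn}$; since $A$ is nondegenerate, $A|_{\theta^\infty}$ is injective on sections, so its kernel in the fibre over $\theta^\infty$ is a well-defined linear subspace independent of $[\boldb]$, and $\cEv^{(\boldb)}_{\bn}$ is a symmetry of that kernel — this shows $Q$ is well defined modulo $\cEv^{(\boldb)}_{\bn}$ and that all such $Q$ are simultaneously homological (the computation above never used a particular choice of $\ib{\,}{\,}{A}$, only \eqref{EqDefFrob} and \eqref{JacobiKoszulShort}, both of which are representative-\/independent). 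For the canonical choice: when $A$ is nondegenerate one can single out a unique representative by requiring, e.g., that $\ib{\,}{\,}{A}$ have no component in $\ker A$ in a fixed complement; when $A$ is only close to degenerate and no global splitting is available, one fixes $\ib{\,}{\,}{A}$ on the boundary of a star-shaped domain in a finite-order horizontal jet space $\overline{J^{|\boldsymbol{\tau}|}}(\Pi\xi_\pi)$ centered at the origin of the fibre, and extends inward by radial homotopy (a Poincaré-lemma-type argument along the dilation vector field), which removes the remaining functional freedom. I expect this last part to be largely formal once the homological property is in hand.
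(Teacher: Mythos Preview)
Your argument for $Q^2=0$ is essentially the paper's own proof: you correctly split into the $\bu$- and $\boldb$-components, use \eqref{LeibnizForOp}--\eqref{EqOplusBKoszul} for the first and the cyclic-sum trick with \eqref{JacobiKoszulShort} for the second, and you track the factors $\tfrac14\to\tfrac12$ and the triple-and-divide-by-three exactly as the paper does.

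The uniqueness paragraph, however, contains a genuine confusion and diverges from the paper. You write ``since $A$ is nondegenerate, $A|_{\theta^\infty}$ is injective on sections''; but nondegeneracy in this paper means $A\circ\nabla=0\Rightarrow\nabla=0$ (no nonzero total-differential right factor), \emph{not} injectivity of $A$. If $A$ were injective there would be nothing to normalize. Consequently your proposed normalization ``no component in $\ker A$ in a fixed complement'' is not what is available, and the Poincar\'e-lemma/radial-homotopy extension you sketch does not match the actual mechanism. The paper instead (i) normalizes $\ib{\,}{\,}{A}$ to vanish at the origin $[\boldb]=0$ of the fibre, (ii) observes that any residual ambiguity $\bn\in\ker A$ is then \emph{quadratic homogeneous} in the odd jet coordinates $[\boldb]$, (iii) uses nondegeneracy to rule out a free functional dependence of $\bn$ on $[\boldb]\cdot[\boldb]$, leaving at most a finite-dimensional linear problem $\bn=\nabla(\psi)$ with $\psi$ the vector of pairwise products $b^i_{\sigma}b^j_{\tau}$, and (iv) in the ``close to degenerate'' case where such a nonzero $\nabla$ survives, exploits the quadratic scaling $\bn(\lambda\boldb)=\lambda^2\bn(\boldb)$ to fix $\bn$ from its values on the boundary of a star-shaped domain in a finite-order jet fibre. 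Your instinct that this step is ``largely formal'' is right in spirit, but the specific argument hinges on the quadratic-homogeneity observation and the precise meaning of nondegeneracy, neither of which your proposal captures.
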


\begin{example}
The second Hamiltonian operator $A_2$ for the KdV equation (see above)
determines the homological vector field 
$Q=\cEv_{A_2(b)}^{(u)}+\cEv^{(b)}_{b b_x}$.
This is the simplest example that involves a nontrivial differential polynomial of degree two in the anti\/-\/commuting variable(s) $\boldb$ and its derivatives. 
\end{example}

\begin{proof}[Proof of Theorem \protect{\textup{\ref{ThEvHomVF}}}]
The anticommutator $[Q,Q]=2Q^2$ of the odd evolutionary vector field is an evolutionary vector field itself. Hence it suffices to find only the coefficients of $\dd/\dd\bu$ and $\dd/\dd\boldb$ in it; by construction, all other terms are the respective total derivatives of these two.

We first consider the odd velocity of $\bu$; it equals
\begin{align*}
\cEv^{(\bu)}_{A(\boldb)}(A)(\boldb)&-\tfrac{1}{2}A\bigl(
\ib{\bp}{\bq}{A}\bigr) {\bigr|}_{
\substack{
  \bp\mathrel{{:}{=}}\boldb\\ 
  \bq\mathrel{{:}{=}}\boldb
} 
}. 
\\ 
\intertext{Since $\boldb$ is odd, we double the minuend as follows:}
{}={}&\frac{1}{2}\Bigl(
\cEv^{(\bu)}_{A(\bp)}(A)(\bq)
-\cEv^{(\bu)}_{A(\bq)}(A)(\bp)
-A\bigl(\ib{\bp}{\bq}{A}\bigr)
\Bigr) {\Bigr|}_{
\substack{
  \bp\mathrel{{:}{=}}\boldb\\ 
  \bq\mathrel{{:}{=}}\boldb
} 
}. 
\end{align*}
By the definition of $\ib{\,}{\,}{A}$ in (\ref{LeibnizForOp}--\ref{EqOplusBKoszul}), the expression in parentheses equals zero.

Second, the coefficient of $\dd/\dd\boldb$ is equal to
\begin{align*}
-\tfrac{1}{2}&\cEv^{(\bu)}_{A(\boldb)}\bigl(\ib{\bp}{\bq}{A}
\bigr) {\bigr|}_{
\substack{
  \bp\mathrel{{:}{=}}\boldb\\ 
  \bq\mathrel{{:}{=}}\boldb
} 
} 
+\tfrac{1}{4}\cEv^{(\boldb)}_{\ib{\bp}{\bq}{A} {\bigr|}_{
\substack{
  \bp\mathrel{{:}{=}}\boldb\\ 
  \bq\mathrel{{:}{=}}\boldb
} 
} 
} 
\bigl(\ib{\br}{\bs}{A}\bigr) {\bigr|}_{
\substack{
  \br\mathrel{{:}{=}}\boldb\\ 
  \bs\mathrel{{:}{=}}\boldb
} 
}.\\ 
\intertext{The bracket $\ib{\,}{\,}{A}$ is skew\/-\/symmetric (in the usual sense);
therefore, the substitution of an odd $\boldb$ for both arguments doubles it. 
Besides, the odd derivation $\cEv^{(\boldb)}_{\ib{\bp}{\bq}{A}}$,
with $\boldb$ for $\bp$ and $\bq$,
acts on its argument by the graded Leibniz rule. By construction, it replaces every instance of a derivative of $\boldb$ with the respective derivative of $\ib{\boldb}{\boldb}{A}$. This yields}
{}={}&\frac{1}{2}\Bigl( -\cEv^{(\bu)}_{A(\boldb)}\bigl(\ib{\bp}{\bq}{A}\bigr)
+\ib{\ib{\bp}{\bq}{A}}{\br}{A} \Bigr) {\Bigr|}_{
\substack{
  \bp\mathrel{{:}{=}}\boldb\\ 
  \bq\mathrel{{:}{=}}\boldb\\
  \br\mathrel{{:}{=}}\boldb
} 
}. 
\end{align*}
We now triple this expression by taking the sum over the cyclic permutations
$\bp\to\bq\to\br\to\bp$, which does not contribute with any sign. The result
is one\/-\/sixth of the Jacobi identity \eqref{JacobiKoszulShort}. This proves our first claim.

\smallskip
We have established that $Q^2=0$ for any choice of the bracket
$\ib{\,}{\,}{A}+\bn([\boldb])$, where $\bn\in\ker A$ is arbitrary.
On one hand, the linear subspace $\ker A$ in the horizontal jet fibre over a point $\theta^\infty\in J^\infty(\pi)$ does not depend on a point $[\boldb]$ of that fibre. On the other hand, the representative $\bn\in\ker A$ can be nonzero and, in principle, depend on a point of the fibre. We claim that $\bn$ can be either trivialized or normalized in a well defined way uniformly on the entire fibre. 

First, by the appropriate shift along the kernel, let us normalize the bracket
$\ib{\,}{\,}{A}$ by the condition $\ib{[0]}{[0]}{A}\equiv0$ 
at the origin $[\boldb]=0$ of the fibre over $\theta^\infty\in J^\infty(\pi)$ 
in the vector bundle $\overline{J^\infty}(\Pi\xi_\pi)\to J^\infty(\pi)$.
Now we notice that the remaining addend $\bn$, which, at all fibre points, belongs to 
$\ker A$ that is common for them, is quadratic homogeneous in the horizontal jet coordinates $[\boldb]$: 
we have $\bn=\bn\bigl([\boldb]\cdot[\boldb]\bigr)$. 
The nondegeneracy assumption for the operator $A$ excludes the possibility of a free functional dependence on $[\boldb]\cdot[\boldb]$ for sections which belong to the domain of a nonzero operator $\nabla$ such that $A\circ\nabla\equiv0$.

Obviously, the differential order of the bracket $\ib{\,}{\,}{A}$ 
is finite and does not exceed the sum of differential orders of the operator $A$ and of its coefficients, which may depend on $[\bu]$. This implies that
$\bn=\bn\bigl((\boldb,\ldots,\boldb_{\boldsymbol{\tau}})\cdot
(\boldb,\ldots,\boldb_{\boldsymbol{\tau}})\bigr)$ 
with $|\boldsymbol{\tau}|<\infty$.
For the odd $\boldb$,\ $\ldots$,\ $\boldb_{\boldsymbol{\tau}}$, take
all pairwise products $\psi={}^{\textrm{t}}\bigl(
b^i\cdot b^j$,\ $\ldots$,\ $b_{\boldsymbol{\tau}}^i\cdot b_{\boldsymbol{\tau}}^j
\bigr)$ that are not identically zero,
and construct the $m$-\/tuple $\bn=\nabla(\psi)$ using the undetermined coefficients: let $\nabla^\alpha_\beta\in\BBR$ and $\bn^\alpha=\nabla^\alpha_\beta\cdot\psi^\beta$. The hypothesis $\bn\neq0$ formally does not contradict our initial assumption of $A$ being nondegenerate,
because the section $\psi$ thus obtained is not arbitrary (there may be fewer free parameters than the number of components in it due to the presence of polynomial relations between the components).
If there are no such nontrivial $\nabla$, the proof is over.
However, suppose there is a solution $\nabla\neq0$.
Then, under the rescaling by $\lambda\in\BBR$ in the fibre, the deviation
$\bn(\lambda)\in\ker A$ from the normalization $\bn\equiv0$ at the origin
grows quadratically in $\lambda$. 
Therefore, to finally fix the bracket $\ib{\,}{\,}{A}$,
it is sufficient to assign the values to $\bn$ at the points
on the boundary of a central\/-\/symmetric star\/-\/shaped domain, centered around the origin in the finite\/-\/dimensional fibre over $\theta^\infty$ 
of the finite 
jet space $\overline{J^{|\boldsymbol{\tau}|}}(\Pi\xi_\pi)$.
By this argument, we normalize $\ib{\,}{\,}{A}$ on the entire fibre
of $\overline{J^\infty}(\Pi\xi_\pi)$.
This defines the canonical representative $Q$ in the class \eqref{VarHomVF} 
on this fibre.
The proof is complete.
\end{proof}

\begin{rem}
In the beginning of the proof, we used the postulated property of the variational anchor $A$ to be the Lie algebra homomorphism. The next part of the proof holds due to the skew\/-\/symmetry of $\ib{\,}{\,}{A}$, the universal Leibniz rule for derivations, and the Jacobi identity for $[\,,\,]_A$.
In a sense, it is possible to regard the summands in \eqref{JacobiKoszulShort} as the right\/-\/hand side of a \textsl{new} `Leibniz rule' for the bracket \eqref{EqOplusBKoszul} in the variational Lie algebroid.
\end{rem}

\begin{rem}
The homological evolutionary vector field \eqref{VarHomVF} 
is quadratic in the odd variable $\boldb$ or its derivatives,
and contains no free term. Suppose, however, that this degree is arbitrary positive (possibly, infinite) and all other assumptions still hold.
This yields the Schlessinger\/--\/Stasheff 
homotopy deformation \cite{Stasheff}
of the Lie algebra structure in $\bigl(\Gamma\Omega(\xi_\pi),[\,,\,]_A\bigr)$. Namely, the Jacobi identity $[Q,Q]=0$ involves now not only the binary bracket $[\,,\,]_A$, but also ternary, quadruple, and other $N$-\/ary 
skew\/-\/symmetric brackets with the numbers of arguments $N\geq3$, 
see \cite{AKZS,Voronov,Stasheff,ForKac} and references therein. 

The non\/-\/existence of the variational $N$-\/vectors at $N\geq3$ for all evolutionary systems with invertible symbols\footnote{It is readily seen that
this assumption depends on the choice of local coordinates.
We conjecture that the same assertion holds for higher\/-\/order evolutionary systems with the nondegenerate linearizations of their right\/-\/hand sides.}
of differential order greater than one 
was argued in \cite{JKVerb3Vect}, see references therein. This now states the rigidity, in the homotopy sense, of the variational Poisson algebroid structures determined by Hamiltonian operators for such equations. 
However, 
the variational Poisson algebroids for equations that can not be 
cast to the above form (e.g., for gauge systems) may admit homotopy deformations.
\end{rem}


\paragraph*
{Variational master equation: an illustration.}
Let us study in more detail the properties of the homological vector 
field \eqref{VarHomVF} if $A\colon\widehat{\varkappa}(\pi)\to\varkappa(\pi)$ is a Hamiltonian differential operator and thus its arguments $\bp$ 
are the \textsl{variational covectors}.\footnote{By definition, the variational covectors are homomorphisms from the space $\varkappa(\pi)=\Gamma\bigl(\pi_\infty^*(\pi)\bigr)$ of generating sections $\vph$ for evolutionary derivations $\cEv_\vph$ to the space of $n$-th horizontal differential forms on $J^\infty(\pi)$ fibred by $\pi_\infty$ over $\varSigma^n$. This implicitly requires to choose the volume form $\Id\boldsymbol{x}$ on the base and thus fix the coupling $\langle\,,\,\rangle$ between the variational covectors and evolutionary vector fields. In these terms, the homological vector field $Q$ for a Hamiltonian operator $A$ is a BV\/-\/field and not just a BRST\/-\/field, see \cite{AKZS}.} 
Under this assumption, 
the induced velocity $\dot{\bp}$ is known (see formula \eqref{EqDogma} below) whenever the evolution $\dot{u}=A(\bp)$ is given. 
The evolutionary vector field \eqref{VarHomVF} on the horizontal jet super\/-\/bundle $\overline{J^\infty}(\Pi\widehat{\pi}_\pi)$ was recently obtained in \cite{JKGolovko2008} for Hamiltonian operators $A$ from exactly this initial standpoint: 
the coefficient of $\dd/\dd\boldb$ was \textsl{postulated} once and forever by formula \eqref{EqDogma} and was not regarded as the bracket $\ib{\boldb}{\boldb}{A}$, whereas the quadratic equation $Q^2=0$ upon the skew\/-\/adjoint operators $A$ then yields variational Poisson structures. 

In the variational Poisson case, the homological vector field \eqref{VarHomVF} can indeed be calculated explicitly
for every Hamiltonian operator $A$. Moreover, this field $Q$ is itself Hamiltonian with respect to the variational Poisson bi\/-\/vector $\bH=\tfrac{1}{2}A(\boldb)\cdot\boldb$ and the canonical symplectic structure \cite{KuperCotangent,DeDonderWeyl} 
on the horizontal infinite jet super\/-\/space $\overline{J^\infty}(\Pi\widehat{\pi}_\pi)$. 
We illustrate this standard algebraic fact by using techniques 
from \cite{Topical}.  

The bracket $\ib{\,}{\,}{A}$ for Hamiltonian operators $A$ emerges
from the Jacobi identity 
for the Lie algebra $\bigl(\bar{H}^n(\pi),\{\,,\,\}_A\bigr)$ 
of the Hamiltonian functionals endowed by $A$ with the Poisson bracket.
Following the notation of the book~\cite{Opava}, we put
$
\ell^{(\bu)}_{A,\psi}(\vph)\mathrel{{:}{=}}\bigl(\cEv_{\vph}(A)\bigr)(\psi)
$ 
for any $\vph\in\varkappa(\pi)$, 
$\psi\in\widehat{\varkappa}(\pi)$, 
and a total differential operator $A\in\CDiff(\widehat{\varkappa}(\pi),
\varkappa(\pi))$.
We note that $\ell^{(\bu)}_{A,\psi}$ is an operator in total derivatives w.r.t.\ its
argument $\vph$ and w.r.t.\ $\psi$ (but not w.r.t.\ the coefficients of $A$),
and hence the adjoint $\bigl(\ell^{(\bu)}_{A,\psi}\bigr)^\dagger$ is well defined.

\begin{lemma}[\cite{Opava}, see a proof in \cite{d3Bous}]\label{AuxLemma}
A 
skew\/-\/adjoint ope\-ra\-tor
$A$ 
is Hamiltonian if and only if the relation 
$\bigl(\cEv_{A(\bp)}(A)\bigr)(\bq)-
  \bigl(\cEv_{A(\bq)}(A)\bigr)(\bp)=
A\bigl(\bigl(\ell_{A,\bp}^{(\bu)}\bigr)^\dagger(\bq)\bigr)$
holds for all $\bp,\bq\in\widehat{\varkappa}(\pi)$. 
Consequently, 
\begin{equation}\label{EqDogma}
\ib{\bp}{\bq}{A}=\left(\ell^{(\bu)}_{A,\bp}\right)^\dagger(\bq),\qquad \bp,\bq\in\widehat{\varkappa}(\pi),
\end{equation}
for a Hamiltonian operator $A$; 
on the other hand, 
this yields a 
convenient criterion under which total differential operators are Hamiltonian.
\end{lemma}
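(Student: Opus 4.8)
The plan is to deduce the stated equivalence from the classical Gelfand--Dorfman criterion (see \cite{Opava,Olver}): a skew\/-\/adjoint total differential operator $A\colon\widehat{\varkappa}(\pi)\to\varkappa(\pi)$ is Hamiltonian if and only if the functional tri\/-\/vector attached to the bi\/-\/vector $\bH=\tfrac12\langle\boldb,A(\boldb)\rangle$ vanishes, i.e.\
\[
\bigl\langle\boldb,\,\bigl(\cEv_{A(\boldb)}(A)\bigr)(\boldb)\bigr\rangle\equiv0,
\]
where $\boldb$ is the odd generating section and $\equiv$ denotes equality modulo total divergences (equivalently, the Schouten square $[[\bH,\bH]]$ vanishes). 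First I would polarize this condition, which is cubic in $\boldb$: substituting $\boldb=\bp+\bq+\br$ with odd $\bp,\bq,\br\in\widehat{\varkappa}(\pi)$ and using the skew\/-\/symmetry of the coupling against the odd arguments, the component trilinear in $\bp,\bq,\br$ collapses to the cyclic identity
\[
\sum\nolimits_{\circlearrowright}\bigl\langle\bp,\,\bigl(\cEv_{A(\bq)}(A)\bigr)(\br)\bigr\rangle\equiv0,
\]
the remaining permutations either reproducing these three terms or cancelling in pairs.

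Second, I would integrate by parts in each cyclic term so as to move the covector $\br$ into the outer slot of the coupling. In the term in which $\br$ already occupies that slot nothing is done. In the term $\langle\bp,(\cEv_{A(\bq)}(A))(\br)\rangle$ one transposes the operator $\cEv_{A(\bq)}(A)$ onto $\bp$; since adjunction commutes with the coefficient derivation $\cEv_{A(\bq)}$ and $A^\dagger=-A$, this produces $-\langle\br,(\cEv_{A(\bq)}(A))(\bp)\rangle$. In the remaining term one first rewrites $(\cEv_{A(\br)}(A))(\bp)=\ell^{(\bu)}_{A,\bp}\bigl(A(\br)\bigr)$, which is merely the definition of $\ell^{(\bu)}_{A,\bp}$, and then transposes successively $\ell^{(\bu)}_{A,\bp}$ and $A$ off the factor $A(\br)$, using $A^\dagger=-A$ once more, to obtain $-\langle\br,A\bigl((\ell^{(\bu)}_{A,\bp})^\dagger(\bq)\bigr)\rangle$. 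Adding the three contributions, the cyclic identity becomes
\[
\Bigl\langle\br,\ \bigl(\cEv_{A(\bp)}(A)\bigr)(\bq)-\bigl(\cEv_{A(\bq)}(A)\bigr)(\bp)-A\bigl((\ell^{(\bu)}_{A,\bp})^\dagger(\bq)\bigr)\Bigr\rangle\equiv0
\]
for all $\br\in\widehat{\varkappa}(\pi)$. Because $\br$ is arbitrary and the coupling is nondegenerate modulo total divergences, the fundamental lemma of the calculus of variations strips off $\br$ and the divergence, yielding the pointwise relation of the lemma; conversely, pairing that relation with an arbitrary $\br$ and tracing the same equalities backwards returns the tri\/-\/vector condition, since every step is an equivalence. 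This establishes the ``if and only if''.

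For the concluding formula \eqref{EqDogma}, it remains to compare the relation just proved with \eqref{LeibnizForOp}--\eqref{EqOplusBKoszul}: by the definition of $\ib{\,}{\,}{A}$ one has $A\bigl(\ib{\bp}{\bq}{A}\bigr)=\bigl(\cEv_{A(\bp)}(A)\bigr)(\bq)-\bigl(\cEv_{A(\bq)}(A)\bigr)(\bp)$, which now equals $A\bigl((\ell^{(\bu)}_{A,\bp})^\dagger(\bq)\bigr)$, so $(\ell^{(\bu)}_{A,\bp})^\dagger(\bq)$ is a representative of $\ib{\bp}{\bq}{A}$ modulo $\ker A$ --- the unique one when $A$ is nondegenerate. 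The main difficulty I anticipate is not conceptual but a matter of bookkeeping: keeping the signs consistent through the three successive integrations by parts while simultaneously tracking the Koszul signs generated by the odd variable $\boldb$ and the interchange identity $(\cEv_\vph(A))^\dagger=\cEv_\vph(A^\dagger)$. A secondary point requiring care is verifying that the polarization of the cubic tri\/-\/vector condition genuinely collapses to the cyclic sum above rather than to the full sum over $S_3$; this again hinges on the odd parity of $\boldb$ and on the skew\/-\/symmetry of the coupling.
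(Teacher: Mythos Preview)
The paper does not supply its own proof of this lemma; it attributes the statement to \cite{Opava} and refers to \cite{d3Bous} for a proof. Your argument is correct and is precisely the standard derivation found in those references (and, in equivalent form, in \cite{Olver}): start from the vanishing of the Schouten tri\/-\/vector $\langle\boldb,(\cEv_{A(\boldb)}A)(\boldb)\rangle$, polarize to the cyclic sum over three test covectors, integrate each cyclic term by parts so that a single covector $\br$ occupies the outer slot of the coupling, and then strip off $\br$ by nondegeneracy of the pairing. The identification of $\ib{\bp}{\bq}{A}$ via \eqref{LeibnizForOp}--\eqref{EqOplusBKoszul} is exactly as you write.

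One presentational remark: the polarization step is cleaner if you take $\bp,\bq,\br$ to be ordinary \emph{even} sections of $\widehat{\varkappa}(\pi)$, since the relation in the lemma is asserted for such sections and the odd $\boldb$ serves only to package the alternating tri\/-\/linear form. Working with odd $\bp,\bq,\br$ throughout is legitimate but, as you yourself anticipate, compounds the Koszul signs with the signs from the three adjunctions; with even test covectors only the latter appear and the computation in your second paragraph goes through verbatim.
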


We emphasize that the notation $\ell^{(\bu)}_{A,\bp}$ is \textsl{not}
the same as the linearization $\ell^{(u)}_{A(\bp)}$.
However, if $\bp$ is the fibre coordinate in the horizontal jet bundle 
over $J^\infty(\pi)$,   
then the notation becomes synonymic (e.g., see \cite{JKGolovko2008}). 
%
Moreover, it is readily seen that\footnote{\label{VarS}%
By the convention
$\delta S=\frac{\delta S}{\delta\eta}\delta\eta$, the variations $\delta\eta$ of (possibly, odd) variables $\eta$ are pushed through to the right.
This is rather unfortunate because it anti\/-\/correlates with Dirac's convention of bra-{} covectors standing on left and {}-ket vectors on right in any formula (e.g.\ in every ``bra-c-ket'')
and brings additional signs of a hardly explicable origin into the formulas.}
\[
\tfrac{1}{2}\left(\ell^{(\bu)}_{A,\boldb}\right)^\dagger(\boldb)=
 \tfrac{\delta}{\delta\bu}\bigl(\tfrac{1}{2}A(\boldb)\cdot\boldb\bigr),
\qquad
A(\boldb)=\tfrac{\delta}{\delta\boldb}\bigl(\tfrac{1}{2}A(\boldb)\cdot\boldb\bigr);
\]
the second identity holds because the Hamiltonian operator $A$ is skew\/-\/adjoint. We finally recall from \cite{KuperCotangent,DeDonderWeyl} 
that the odd neighbour $\overline{J^\infty}(\Pi\widehat{\pi}_\pi)\to J^\infty(\pi)$
of the variational cotangent bundle is endowed with the canonical symplectic structure
$\boldsymbol{\omega}=\left(\begin{smallmatrix}\phantom{+}0 & 1\\ -1 & 0\end{smallmatrix}\right)$. Let us remember that the structure $\boldsymbol{\omega}$ is present even if the target space for local
sections of the bundle $\pi$ or, more generally, the target manifold for the mappings $\bu\colon\varSigma^n\to M^m$ is not symplectic. This is in contrast with \cite{AKZS} where a symplectic structure was induced on
the space of mappings ${\EuScript{E}}=\{\varSigma^n\to M^m\}$ from 
the two\/-\/form on $M^m$, i.e., by using the requirement that the even\/-\/dimensional target manifold be symplectic.

Let us summarize the result. Now on the variational level,
it goes in parallel with the properties of the homological vector fields $Q$
within the finite\/-\/dimensional picture 
of \cite{AKZS}.

\begin{state}
For every Hamiltonian differential operator $A$, the homological evolutionary vector field \eqref{VarHomVF} is itself Hamiltonian with respect to the variational Poisson bi\/-\/vector $\bH=\tfrac{1}{2}A(\boldb)\cdot\boldb$ and the canonical symplectic structure $\boldsymbol{\omega}$:
\begin{equation}\label{VarHomVFPoisson}
Q=\cEv^{(\bu)}_{\tfrac{\delta}{\delta\boldb}(\bH)}
+\cEv^{(\boldb)}_{-\tfrac{\delta}{\delta\bu}(\bH)}.
\end{equation}
The Lagrangian $\bL=\dot{\bu}\bp-\bH$, obtained from the Hamiltonian $\bH$ by the Legendre transform, equals $\bL=\tfrac{1}{2}A(\boldb)\cdot\boldb$
and satisfies the variational master equation $\lshad\bL,\bL\rshad=0$,
where $\lshad\,,\,\rshad$ is the variational Schouten bracket.
\end{state}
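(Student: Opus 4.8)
The plan is to extract all three assertions from material already assembled in the paper, the only genuine inputs being Lemma~\ref{AuxLemma} together with the two variational identities displayed immediately above the statement; no new computation of comparable size is needed.

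First I would specialize the homological field \eqref{VarHomVF} of Theorem~\ref{ThEvHomVF} to the Hamiltonian case. By \eqref{EqDogma}, the bracket $\ib{\bp}{\bq}{A}$ evaluated at $\bp\mathrel{{:}{=}}\bq\mathrel{{:}{=}}\boldb$ becomes $\bigl(\ell^{(\bu)}_{A,\boldb}\bigr)^\dagger(\boldb)$, so that
\[
Q=\cEv^{(\bu)}_{A(\boldb)}-\tfrac12\,\cEv^{(\boldb)}_{\left(\ell^{(\bu)}_{A,\boldb}\right)^\dagger(\boldb)}.
\]
Now I substitute the identities $A(\boldb)=\tfrac{\delta}{\delta\boldb}\bH$ (valid because $A$ is skew\/-\/adjoint) and $\tfrac12\left(\ell^{(\bu)}_{A,\boldb}\right)^\dagger(\boldb)=\tfrac{\delta}{\delta\bu}\bH$, where $\bH=\tfrac12 A(\boldb)\cdot\boldb$; this turns the field into \eqref{VarHomVFPoisson} verbatim. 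Reading off the velocities $\dot\bu=\delta\bH/\delta\boldb$ and $\dot\boldb=-\delta\bH/\delta\bu$ against the canonical form $\boldsymbol{\omega}$ on the conjugate pair $(\bu,\boldb)$ of $\overline{J^\infty}(\Pi\widehat{\pi}_\pi)$ exhibits $Q$ as the Hamiltonian evolutionary vector field of $\bH$, which is the first claim. The Legendre part is immediate: since $\dot\bu=A(\bp)$ with $\bp=\boldb$, one has $\dot\bu\,\bp=A(\boldb)\cdot\boldb=2\bH$, whence $\bL=\dot\bu\,\bp-\bH=\bH=\tfrac12 A(\boldb)\cdot\boldb$, and in particular $Q=X_{\bL}$.

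For the variational master equation I would invoke the standard dictionary between the (odd, since it couples the even $\bu$ with the odd $\boldb$) canonical structure $\boldsymbol{\omega}$, its variational Schouten bracket $\lshad\,,\,\rshad$, and the evolutionary fields they generate: $Q=X_{\bL}=\lshad\bL,-\rshad$, and by the graded Jacobi identity for $\lshad\,,\,\rshad$ one has $[Q,Q]=2Q^2=X_{\lshad\bL,\bL\rshad}$. Theorem~\ref{ThEvHomVF} already gives $[Q,Q]=0$, hence $X_{\lshad\bL,\bL\rshad}=0$, i.e.\ both variational derivatives $\delta\lshad\bL,\bL\rshad/\delta\bu$ and $\delta\lshad\bL,\bL\rshad/\delta\boldb$ vanish; so $\lshad\bL,\bL\rshad$ is a constant, and being homogeneous of positive degree in $\boldb$ it must be zero. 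Equivalently, one may evaluate $\lshad\bL,\bL\rshad$ directly from the Darboux formula $\lshad F,G\rshad=\bigl\langle\tfrac{\delta F}{\delta\bu},\tfrac{\delta G}{\delta\boldb}\bigr\rangle\pm\bigl\langle\tfrac{\delta F}{\delta\boldb},\tfrac{\delta G}{\delta\bu}\bigr\rangle$ and recognize in $\lshad\bL,\bL\rshad$, up to a nonzero scalar, the coefficient of $\dd/\dd\boldb$ in $2Q^2$, which was shown to vanish in the proof of Theorem~\ref{ThEvHomVF}.

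The step I expect to be the real obstacle is the sign and parity bookkeeping of the odd variational calculus: keeping the adjoint\/-\/operator identities, the ordering of the variations $\delta\bu,\delta\boldb$, and the graded Leibniz rule mutually consistent — precisely the hazard flagged in the footnote on the $\delta S=\tfrac{\delta S}{\delta\eta}\,\delta\eta$ convention — and making the passage from ``$X_{\lshad\bL,\bL\rshad}=0$'' to ``$\lshad\bL,\bL\rshad=0$'' rigorous over the infinite\/-\/dimensional super\/-\/space rather than merely formal.
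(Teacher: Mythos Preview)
Your proposal is correct and matches the paper's approach exactly: the proposition is stated in the paper as a summary (``Let us summarize the result'') of the material assembled immediately before it, namely Lemma~\ref{AuxLemma} with formula~\eqref{EqDogma} and the two displayed variational identities for $\delta\bH/\delta\bu$ and $\delta\bH/\delta\boldb$, and you combine precisely those ingredients in the same order. The paper gives no separate proof environment; your write-up simply makes explicit what the paper leaves to the reader, including the derivation of $\lshad\bL,\bL\rshad=0$ from $Q^2=0$ (which the paper does not spell out at all).
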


\begin{example}
Consider the Hamiltonian operator
$A_2=-\tfrac{1}{2}\,\tfrac{\Id^3}{\Id x^3}+
w\,\tfrac{\Id}{\Id x}+\tfrac{\Id}{\Id x}\circ w$
for the KdV equation; the bracket \eqref{EqDogma} is
$\ib{p}{q}{A_2}=p_xq-pq_x$, see \eqref{BrA2}.
The variational Poisson bi\/-\/vector is
$\bH=-\tfrac{1}{4}b_{xxx}b+wb_xb$, whence (see footnote \ref{VarS})
\[
{\delta\bH}/{\delta b}=-\tfrac{1}{2}b_{xxx}+2wb_x+w_xb
\quad \text{and}\quad
{\delta\bH}/{\delta w}=b_xb.
\]
The homological evolutionary vector field \eqref{VarHomVF} equals
$Q=\cEv^{(w)}_{A_2(b)}-\tfrac{1}{2}\cEv^{(b)}_{2b_xb}$, 
which is precisely \eqref{VarHomVFPoisson}.
\end{example}

Let us finally remark that 
the explicitly given\footnote{The Lagrangian $\bL$ 
and the Hamiltonian $\bH$ coincide, 
whence the variational Poisson formalism can be viewed as a 
very remote part of the theory of uniform rectilinear motion.}
Hamiltonian $\bH$ of $Q$ has nothing to do with a Hamiltonian $\cH$ of the actual motion $\dot{\bu}=A(\delta\cH/\delta\bu)$
of the image of $\varSigma^n$ in $M^m$ because 
$\cH$ may not even exist\,!

\paragraph*
{Conclusion.}\noindent%
We observe that the physically motivated geometry of \cite{AKZS} is manifestly \textsl{variational}, that is, the model of strings $\varSigma^n\to M^m$ 
requires us to use jet bundles that encode the information about the mappings together with all their derivatives. It is possible to introduce the constructions such as the 
Poisson structures or homological 
vector fields of zero differential order along $\varSigma^n$,
but, clearly, they neglect the full setup.
Physically speaking, the equality of all derivatives to zero means that either $n=0$ and $\varSigma=\{\text{pt}\}$ (hence there are no derivatives at all) or the image of $\varSigma^n$ under a constant mapping is a point in $M^m$ so that the point particle has $n$ hidden 
dimensions. 
In this paper, we introduced the definitions and established their properties along the lines of \cite{Vaintrob}, and thus we removed the limitation of \cite{AKZS}
for the particles to be only points. Furthermore, we notice that yet one more assumption of \cite{AKZS}, on the presence of a symplectic structure on the target manifold $M^m$, is excessive for the homological evolutionary BV-\/field to be Hamiltonian
with respect to 
the canonical symplectic structure on the space of mappings
(although, of course, variational methods remain applicable to the Poisson sigma\/-\/models). 
We hope that this will help in the study of the dimensional 
reduction\/-\/free models for strings in Minkowski space\/-\/time $M^{3,1}$.


\paragraph*
{Acknowledgements.}
The authors thank 
B.\,A.\,Dub\-ro\-vin, I.\,S.\,Kra\-sil'\-sh\-chik,
M.\,A.\,Ne\-ste\-ren\-ko, P. J. Olver, V. N. Roub\-tsov, 
and V.\,V.\,So\-ko\-lov 
for the discussions and remarks.
One of the authors (A.~K.) is grateful to the organizing committee of the International Workshop ``Nonlinear Physics: Theory and Experiment VI'' for the warm hospitality.
A.\,K.\ thanks the 
$\smash{\mbox{IH\'ES}}$, SISSA, and CRM ($\text{\smash{Montr\'eal}}$) 
for financial support and 
hospitality.

This work was supported in part by the European Union 
(FP6 Marie Curie RTN \emph{ENIGMA} Contract
No.\,MRTN-CT-2004-5652), the European Science Foundation Program
{MISGAM}, and the NWO (Grant Nos.\ B61--609 and VENI 639.031.623).

\end{document}